\author[C.~Sanna]{Carlo Sanna$^\dagger$}
\thanks{$\dagger\,$C.~Sanna is a member of GNSAGA of INdAM and of CrypTO, the group of Cryptography and Number~Theory of Politecnico di Torino}
\address{\parbox{\linewidth}{
Politecnico di Torino, Department of Mathematical Sciences\\
Corso Duca degli Abruzzi 24, 10129 Torino, Italy\\[-8pt]}}
\email{carlo.sanna.dev@gmail.com}
\keywords{asymptotic formula; least common multiple; $q$-analog; random set}
\subjclass[2010]{Primary: 11N37, Secondary: 11B99.}
\title{On the least common multiple of random $q$-integers}
\newtheorem{theorem}{Theorem}[section]
\newtheorem{lemma}[theorem]{Lemma}
\theoremstyle{remark}
\newtheorem{remark}{Remark}[section]
\newcommand{\lcm}{\operatorname{lcm}}
\newcommand{\Li}{\operatorname{Li}}
\begin{document}

\begin{abstract}
For every positive integer $n$ and for every $\alpha \in [0, 1]$, let $\mathcal{B}(n, \alpha)$ denote the probabilistic model in which a random set $\mathcal{A} \subseteq \{1, \dots, n\}$ is constructed by picking independently each element of $\{1, \dots, n\}$ with probability $\alpha$.
Cilleruelo, Ru\'{e}, \v{S}arka, and Zumalac\'{a}rregui proved an almost sure asymptotic formula for the logarithm of the least common multiple of the elements of $\mathcal{A}$.

Let $q$ be an indeterminate and let $[k]_q := 1 + q + q^2 + \cdots + q^{k-1} \in \mathbb{Z}[q]$ be the $q$-analog of the positive integer $k$.
We determine the expected value and the variance of $X := \deg \lcm\!\big([\mathcal{A}]_q\big)$, where $[\mathcal{A}]_q := \big\{[k]_q : k \in \mathcal{A}\big\}$.
Then we prove an almost sure asymptotic formula for $X$, which is a $q$-analog of the result of Cilleruelo~et~al.
\end{abstract}

\maketitle

\section{Introduction}

For every positive integer $n$ and every $\alpha \in [0, 1]$, let $\mathcal{B}(n, \alpha)$ denote the probabilistic model in which a random set $\mathcal{A} \subseteq \{1, \dots, n\}$ is constructed by picking independently each element of $\{1, \dots, n\}$ with probability $\alpha$.
Cilleruelo, Ru\'{e}, \v{S}arka, and Zumalac\'{a}rregui~\cite{MR3239153} studied the least common multiple $\lcm(\mathcal{A})$ of the elements of $\mathcal{A}$ and proved the following result (see~\cite{MR4009436} for a more precise version, and~\cite{MR3649012, MR3640773, MR4091939, Preprint0, MR3991415, Preprint1} for other results of a similar flavor).

\begin{theorem}\label{thm:cilleruelo}
Let $\mathcal{A}$ be a random set in $\mathcal{B}(n, \alpha)$.
Then, as $\alpha n \to +\infty$, we have
\begin{equation*}
\log \lcm(\mathcal{A}) \sim \frac{\alpha \log(1/\alpha)}{1 - \alpha} \cdot n ,
\end{equation*}
with probability $1 - o(1)$, where the factor involving $\alpha$ is meant to be equal to $1$ for $\alpha = 1$.
\end{theorem}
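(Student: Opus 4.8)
The plan is to reduce $\log\lcm(\mathcal{A})$ to a sum of Bernoulli contributions indexed by prime powers and then apply the second-moment method. Since the exponent of a prime $p$ in $\lcm(\mathcal{A})$ equals $\max_{k\in\mathcal{A}}v_p(k)$, where $v_p$ denotes the $p$-adic valuation, one has
\[
\log\lcm(\mathcal{A}) = \sum_{p^j\le n}(\log p)\,Y_{p,j},\qquad Y_{p,j} := \mathbf{1}\{\exists\,k\in\mathcal{A}:\ p^j\mid k\},
\]
the sum being over prime powers $p^j\le n$. As the multiples of $p^j$ in $\{1,\dots,n\}$ number $\lfloor n/p^j\rfloor$ and each enters $\mathcal{A}$ independently with probability $\alpha$, we get $\mathbb{E}[Y_{p,j}] = 1-(1-\alpha)^{\lfloor n/p^j\rfloor}$. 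The theorem then reduces to two estimates, $\mathbb{E}[\log\lcm(\mathcal{A})]\sim\frac{\alpha\log(1/\alpha)}{1-\alpha}n$ and $\operatorname{Var}(\log\lcm(\mathcal{A})) = o\big(\mathbb{E}[\log\lcm(\mathcal{A})]^2\big)$, after which Chebyshev's inequality yields the statement with probability $1-o(1)$. Throughout I would lean on the Chebyshev function $\psi(x) = \sum_{p^j\le x}\log p = x + o(x)$ and the companion bound $\sum_{p\le x}(\log p)^2\asymp x\log x$.

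For the mean, I would group the prime powers by the value $r=\lfloor n/p^j\rfloor$: those with $\lfloor n/p^j\rfloor = r$ are the prime powers in $(n/(r+1),\,n/r]$, of total log-weight $\psi(n/r)-\psi(n/(r+1))$. Summation by parts then gives the exact identity
\[
\mathbb{E}[\log\lcm(\mathcal{A})] = \frac{\alpha}{1-\alpha}\sum_{r\ge 1}(1-\alpha)^r\,\psi(n/r),
\]
and replacing $\psi(n/r)$ by $n/r$ turns the right-hand side into $\frac{\alpha}{1-\alpha}n\sum_{r\ge1}(1-\alpha)^r/r = \frac{\alpha}{1-\alpha}n\log(1/\alpha)$, which is exactly the claimed main term. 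The delicate point is controlling the error $\frac{\alpha}{1-\alpha}\sum_{r\ge1}(1-\alpha)^r\big(\psi(n/r)-n/r\big)$ uniformly in $\alpha$: when $\alpha$ is bounded away from $0$ the weights $(1-\alpha)^r$ concentrate on $r = O(1/\alpha)$, but when $\alpha\to0$ they decay slowly and the tail up to $r\asymp n$ must be handled, which I would do by invoking $\psi(x)-x = o(x)$ on the small-$r$ (large-argument) range and the crude bounds $\psi(n/r)=O(n/r)$ together with $\sum_r(1-\alpha)^r/r$ on the tail, checking the total is $o\big(\alpha\log(1/\alpha)\,n\big)$ by means of $\alpha n\to\infty$.

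For the variance, inclusion--exclusion on the multiple-sets $A,B\subseteq\{1,\dots,n\}$ of $p^j$ and $p'^{j'}$ gives the exact covariance
\[
\operatorname{Cov}(Y_{p,j},Y_{p',j'}) = (1-\alpha)^{|A\cup B|}\Big(1-(1-\alpha)^{|A\cap B|}\Big)\ge 0,
\]
which vanishes unless $A\cap B\ne\varnothing$, i.e.\ (for distinct primes) unless $p^jp'^{j'}\le n$. The diagonal part $\sum_{p^j\le n}(\log p)^2(1-\alpha)^{N}\big(1-(1-\alpha)^{N}\big)$, with $N=\lfloor n/p^j\rfloor$, concentrates on $p^j\asymp\alpha n$ and is of order $n\,\alpha\log(\alpha n)$. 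For the off-diagonal part I would bound $1-(1-\alpha)^{|A\cap B|}\le|A\cap B|\log\tfrac1{1-\alpha}$ and use $|A\cup B|\ge\tfrac12(|A|+|B|)$, which makes the double sum factor through
\[
\Big(\sum_{p^j\le n}\frac{\log p}{p^j}\,(1-\alpha)^{\lfloor n/p^j\rfloor/2}\Big)^{2}\asymp\big(\log(1/\alpha)\big)^{2},
\]
giving order $n\,\alpha\big(\log(1/\alpha)\big)^2$. Both are negligible against $\mathbb{E}[\log\lcm(\mathcal{A})]^2\asymp\big(\alpha\log(1/\alpha)\,n\big)^2$, the ratios tending to $0$ like $1/(\alpha n)$ up to logarithmic factors.

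I expect the main obstacle to be precisely this uniformity as $\alpha\to0$ with $\alpha n\to\infty$. The series identities are exact and the leading constant $\frac{\alpha\log(1/\alpha)}{1-\alpha}$ drops out cleanly; the work lies in justifying the replacement of the prime-power sums by their $\psi$-approximations and, above all, in bounding the covariance sum, whose exponential factors $(1-\alpha)^{|A\cup B|}$ no longer provide decay once $\alpha$ is as small as $1/n$, so that every error must be weighed against the genuinely small main term $\alpha\log(1/\alpha)\,n$ rather than against $n$ itself.
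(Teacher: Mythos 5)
First, a structural point: the paper does not prove Theorem~\ref{thm:cilleruelo} at all --- it is quoted from Cilleruelo, Ru\'{e}, \v{S}arka, and Zumalac\'{a}rregui~\cite{MR3239153}, and the paper's own proofs concern only the $q$-analog. So your proposal can only be measured against the machinery the paper builds for Theorems~\ref{thm:expectation}--\ref{thm:Aq}, and there the match is essentially exact: your indicators $Y_{p,j}$ with $\mathbb{E}[Y_{p,j}]=1-(1-\alpha)^{\lfloor n/p^j\rfloor}$ and $\operatorname{Cov}(Y_{p,j},Y_{p',j'})=(1-\alpha)^{|A\cup B|}\big(1-(1-\alpha)^{|A\cap B|}\big)$ are the prime-power counterparts of Lemma~\ref{lem:EIAd}; your grouping by $r=\lfloor n/p^j\rfloor$ followed by summation by parts, with $\psi$ playing the role of $\Phi$, is the same manipulation as~\eqref{equ:EX2}; and your final Chebyshev step is the proof of Theorem~\ref{thm:Aq} verbatim. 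In short, you have correctly reconstructed the standard second-moment argument, which is both the route of~\cite{MR3239153} and the template of this paper, with $\log p$ over prime powers replacing $\varphi(d)$ over integers $d>1$.

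Two repairs are needed, though neither is fatal. (a) In the mean estimate you aim for an error $o\big(\alpha\log(1/\alpha)\,n\big)$; this target is unattainable when $\alpha$ is close to $1$, since there $\log(1/\alpha)\to 0$ while the error contains the term $\alpha\big(\psi(n)-n\big)$, which has no reason to vanish. The comparison must be made against the full main term $\frac{\alpha\log(1/\alpha)}{1-\alpha}\,n$, which is at least $\alpha n$ for every $\alpha$ (it is the value of $\alpha n\sum_{r\ge 1}\beta^{r-1}/r$, whose $r=1$ term alone is $\alpha n$); splitting the sum at $r=n/T$ then gives an error at most $\epsilon(T)\cdot\frac{\alpha\log(1/\alpha)}{1-\alpha}n+O\big(T e^{-\alpha n/T}\big)$, which is $o$ of the main term uniformly as $\alpha n\to+\infty$. (b) Your off-diagonal factorization rests on $|A\cap B|\le n/(p^jp'^{j'})$, which holds only for \emph{distinct} primes; pairs $p^j,p^{j'}$ with the same prime and $j\ne j'$ have $|A\cap B|=\lfloor n/p^{\max(j,j')}\rfloor$ and must be bounded separately (an easy computation shows they contribute $O(\alpha n)$, hence negligibly, but the case cannot simply be folded into the factored bound). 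Relatedly, your bound $1-(1-\alpha)^m\le m\log\frac{1}{1-\alpha}$ degrades as $\alpha\to 1$; the Bernoulli bound $1-(1-\alpha)^m\le\alpha m$ (Lemma~\ref{lem:Bernoulli} of the paper) is uniform in $\alpha$ and should be used throughout, exactly as the paper does in its variance estimate.
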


Let $q$ be an indeterminate.
The \emph{$q$-analog} of a positive integer $k$ is defined by 
\begin{equation*}
[k]_q := 1 + q + q^2 + \cdots + q^{k - 1} \in \mathbb{Z}[q] .
\end{equation*}
The $q$-analogs of many other mathematical objects (factorial, binomial coefficients, hypergeometric series, derivative, integral...) have been extensively studied, especially in Analysis and Combinatorics~\cite{MR1865777, MR858826}.
For every set $\mathcal{S}$ of positive integers, let $[\mathcal{S}]_q := \big\{[k]_q : k \in \mathcal{S}\big\}$.

The aim of this paper is to study the least common multiple of the elements of $[\mathcal{A}]_q$ for a random set $\mathcal{A}$ in $\mathcal{B}(n, \alpha)$.
Our main results are the following:

\begin{theorem}\label{thm:expectation}
Let $\mathcal{A}$ be a random set in $\mathcal{B}(n, \alpha)$ and put $X := \deg \lcm\!\big([\mathcal{A}]_q\big)$.
Then, for every integer $n \geq 2$ and every $\alpha \in [0,1]$, we have
\begin{equation}\label{equ:expectation}
\mathbb{E}[X] = \frac{3}{\pi^2} \cdot \frac{\alpha \Li_2(1 - \alpha)}{1 - \alpha} \cdot n^2 + O\!\left(\alpha n (\log n)^2 \right) ,
\end{equation}
where $\Li_2(z) := \sum_{k=1}^\infty z^k / k^2$ is the dilogarithm and the factor involving $\alpha$ is meant to be equal to $1$ when $\alpha = 1$.
In particular,
\begin{equation*}
\mathbb{E}[X] \sim \frac{3}{\pi^2} \cdot \frac{\alpha \Li_2(1 - \alpha)}{1 - \alpha} \cdot n^2 ,
\end{equation*}
as $n \to +\infty$, uniformly for $\alpha \in {(0,1]}$.
\end{theorem}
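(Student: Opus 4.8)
The plan is to convert the algebraic quantity $X$ into an arithmetic sum of Euler totients via the cyclotomic factorization of the $q$-integers, compute its expectation by an elementary probabilistic argument, and finally evaluate the resulting sum asymptotically using the classical estimate for the summatory totient function.

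First I would record the factorization. Since $q^k - 1 = \prod_{d \mid k} \Phi_d(q)$ and $q - 1 = \Phi_1(q)$, where $\Phi_d(q)$ is the $d$-th cyclotomic polynomial, we have $[k]_q = \prod_{d \mid k,\, d > 1} \Phi_d(q)$. The polynomials $\Phi_d(q)$ are monic, irreducible over $\mathbb{Q}$, and pairwise distinct, hence pairwise coprime in $\mathbb{Z}[q]$. Consequently $\lcm\!\big([\mathcal{A}]_q\big) = \prod_{d \in \mathcal{D}} \Phi_d(q)$, where $\mathcal{D}$ is the set of integers $d > 1$ dividing at least one element of $\mathcal{A}$, and since $\deg \Phi_d(q) = \varphi(d)$,
\begin{equation*}
X = \sum_{d = 2}^{n} \varphi(d)\, \mathbf{1}\big[\,d \mid k \text{ for some } k \in \mathcal{A}\,\big] .
\end{equation*}
Taking expectations term by term is then immediate: for fixed $d$ there are exactly $\lfloor n/d \rfloor$ multiples of $d$ in $\{1, \dots, n\}$, each chosen independently with probability $\alpha$, so the probability that $\mathcal{A}$ avoids all of them is $(1 - \alpha)^{\lfloor n/d \rfloor}$, whence
\begin{equation*}
\mathbb{E}[X] = \sum_{d = 2}^{n} \varphi(d) \Big(1 - (1 - \alpha)^{\lfloor n/d \rfloor}\Big) .
\end{equation*}

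For $\alpha = 1$ the right-hand side is $\sum_{d=2}^n \varphi(d) = \tfrac{3}{\pi^2} n^2 + O(n \log n)$, matching the claim, so I may assume $\alpha < 1$. Writing $1 - (1-\alpha)^m = \alpha \sum_{j=0}^{m-1} (1-\alpha)^j$ and interchanging the summations, the condition $\lfloor n/d \rfloor \geq j+1$ is equivalent to $d \leq n/(j+1)$; after reindexing $k = j+1$ this gives
\begin{equation*}
\mathbb{E}[X] = \frac{\alpha}{1 - \alpha} \sum_{k = 1}^{n} (1 - \alpha)^k\, \Psi\!\left(\frac{n}{k}\right), \qquad \Psi(x) := \sum_{2 \,\leq\, d \,\leq\, x} \varphi(d) ,
\end{equation*}
where the terms with $n/k < 2$ vanish. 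Inserting the classical estimate $\Psi(x) = \tfrac{3}{\pi^2} x^2 + O(x \log x)$, the leading term contributes
\begin{equation*}
\frac{3}{\pi^2} \cdot \frac{\alpha}{1 - \alpha} \cdot n^2 \sum_{k = 1}^{n} \frac{(1-\alpha)^k}{k^2} = \frac{3}{\pi^2} \cdot \frac{\alpha \Li_2(1 - \alpha)}{1 - \alpha} \cdot n^2 + O(\alpha n) ,
\end{equation*}
where completing the series to $\Li_2(1-\alpha)$ costs a tail at most $\tfrac{\alpha}{1-\alpha}(1-\alpha)^n n = \alpha(1-\alpha)^{n-1} n = O(\alpha n)$, while the $O(x\log x)$ term contributes $O\!\big( \tfrac{\alpha}{1 - \alpha}\, n \log n \sum_{k=1}^{n} (1-\alpha)^k/k \big)$.

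The main obstacle is to bound this last quantity by $O(\alpha n (\log n)^2)$ \emph{uniformly} in $\alpha$, since the naive estimate retains a spurious factor $1/(1-\alpha)$ diverging as $\alpha \to 1$. I would split into two ranges. For $\alpha \leq \tfrac12$ one has $\tfrac{\alpha}{1-\alpha} \leq 2\alpha$ and $\sum_{k=1}^n (1-\alpha)^k/k \leq \sum_{k=1}^n 1/k = O(\log n)$, giving $O(\alpha n (\log n)^2)$. For $\alpha > \tfrac12$ one instead uses the exact value $\sum_{k \geq 1} (1-\alpha)^k/k = \log(1/\alpha)$ together with the elementary inequality $\log(1/\alpha) \leq 2(1-\alpha)$, valid for $\alpha \geq \tfrac12$, so that $\tfrac{\alpha}{1-\alpha} \log(1/\alpha) \leq 2\alpha$ and the contribution is $O(\alpha n \log n)$. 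Collecting the main term and these error estimates yields \eqref{equ:expectation}; the concluding asymptotic then follows because $\Li_2(1-\alpha) \geq 1-\alpha$ forces the main term to be $\gg \alpha n^2$ against an error of size $O(\alpha n (\log n)^2)$, so that the relative error is $O\big((\log n)^2 / n\big)$ uniformly for $\alpha \in (0,1]$.
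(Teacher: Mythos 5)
Your proof is correct and takes essentially the same route as the paper: the cyclotomic factorization giving $X=\sum_{1<d\le n}\varphi(d)\,\mathbf{1}[\,d\mid k \text{ for some } k\in\mathcal{A}\,]$, the expectation $1-(1-\alpha)^{\lfloor n/d\rfloor}$ per term, and the summatory-totient estimate producing the dilogarithm. Your geometric-series interchange is just the paper's grouping by $j=\lfloor n/d\rfloor$ in disguise, and your split at $\alpha=\tfrac12$ is a valid (if unnecessary) substitute for simply noting $\tfrac{\alpha}{1-\alpha}(1-\alpha)^{k}=\alpha(1-\alpha)^{k-1}\le\alpha$, which is how the paper keeps the error uniform in $\alpha$.
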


\begin{theorem}\label{thm:variance}
Let $\mathcal{A}$ be a random set in $\mathcal{B}(n, \alpha)$ and put $X := \deg \lcm\!\big([\mathcal{A}]_q\big)$.
Then there exists a function $\mathrm{v} : {(0,1)} \to \mathbb{R}^+$ such that, as $\alpha n / \big((\log n)^3 \log \log n\big) \to +\infty$, we have
\begin{equation}\label{equ:variance_asymp}
\mathbb{V}[X] = (\mathrm{v}(\alpha) + o(1)) \, n^3 .
\end{equation}
Moreover, the upper bound
\begin{equation}\label{equ:variance_upper}
\mathbb{V}[X] \ll \alpha n^3 ,
\end{equation}
holds for every positive integer $n$ and every $\alpha \in [0, 1]$.
\end{theorem}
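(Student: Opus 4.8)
The starting point is an exact combinatorial description of $X$. The plan is to use the cyclotomic factorization $q^k-1=\prod_{d\mid k}\Phi_d(q)$ to write $[k]_q=\prod_{1<d\mid k}\Phi_d(q)$ as a product of \emph{distinct} irreducible cyclotomic polynomials, so that each $[k]_q$ is squarefree in $\mathbb{Z}[q]$. Consequently $\lcm([\mathcal{A}]_q)=\prod_{d}\Phi_d(q)^{Y_d}$, where $Y_d$ is the indicator that $d$ divides some element of $\mathcal{A}$, and taking degrees (with $\deg\Phi_d=\varphi(d)$) yields the exact identity
\[
X=\sum_{d=2}^n\varphi(d)\,Y_d,\qquad Y_d:=\mathbf 1\!\left[\mathcal{A}\cap d\mathbb{Z}\neq\emptyset\right],
\]
since no $d>n$ divides an element of $\{1,\dots,n\}$. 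Because distinct elements are chosen independently and $Y_d$ depends only on the $\lfloor n/d\rfloor$ multiples of $d$, I get $\mathbb{E}[Y_d]=1-(1-\alpha)^{\lfloor n/d\rfloor}$ and, using $\big|(d\mathbb{Z}\cup e\mathbb{Z})\cap\{1,\dots,n\}\big|=\lfloor n/d\rfloor+\lfloor n/e\rfloor-\lfloor n/\lcm(d,e)\rfloor$,
\[
\operatorname{Cov}(Y_d,Y_e)=(1-\alpha)^{\lfloor n/d\rfloor+\lfloor n/e\rfloor}\Big((1-\alpha)^{-\lfloor n/\lcm(d,e)\rfloor}-1\Big)\ge 0 .
\]
This gives the exact formula $\mathbb{V}[X]=\sum_{d,e=2}^n\varphi(d)\varphi(e)\operatorname{Cov}(Y_d,Y_e)$, from which both assertions will be extracted.

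For the upper bound I would first record that $\operatorname{Cov}(Y_d,Y_e)=0$ unless $\lcm(d,e)\le n$, and that with $C:=\lfloor n/\lcm(d,e)\rfloor\le\min(\lfloor n/d\rfloor,\lfloor n/e\rfloor)$ one has $1-(1-\alpha)^C\le C\alpha\le \alpha n/\lcm(d,e)$ by Bernoulli's inequality, together with $\lfloor n/d\rfloor+\lfloor n/e\rfloor-C\ge\tfrac12(\lfloor n/d\rfloor+\lfloor n/e\rfloor)$, so that
\[
\operatorname{Cov}(Y_d,Y_e)\le \frac{\alpha n}{\lcm(d,e)}\,(1-\alpha)^{(\lfloor n/d\rfloor+\lfloor n/e\rfloor)/2}.
\]
Writing $g=\gcd(d,e)$, $d=gu$, $e=gv$ with $\gcd(u,v)=1$, and using the elementary identity $\varphi(d)\varphi(e)=\varphi(\gcd(d,e))\,\varphi(\lcm(d,e))$ together with $\varphi(m)\le m$, each summand is then at most $\alpha n\,\varphi(g)\,(1-\alpha)^{(\lfloor n/(gu)\rfloor+\lfloor n/(gv)\rfloor)/2}$, subject to $guv\le n$. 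The point is that the constraint $guv\le n$ is what makes the triple sum converge cleanly: summing $\varphi(g)$ over $g\le n/(uv)$ against the decaying exponential gives $\ll (n/uv)^2$, and the outer sum $\sum_{\gcd(u,v)=1}(uv)^{-2}\le\zeta(2)^2$ converges, yielding $\mathbb{V}[X]\ll\alpha n^3$ \emph{uniformly} in $\alpha$. I would stress that discarding the constraint $\lcm(d,e)\le n$ (for instance by bounding $\sum_{g\mid d\le n}(1-\alpha)^{\lfloor n/d\rfloor/2}\ll n/g$ and summing $\sum_g\varphi(g)(n/g)^2$) costs a spurious factor $\log n$, so retaining $\lcm(d,e)\le n$ is essential.

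For the asymptotic I would reorganize the exact variance in the same coordinates,
\[
\mathbb{V}[X]=\sum_{\gcd(u,v)=1}\ \sum_{g\le n/(uv)}\varphi(g)\,\varphi(guv)\,(1-\alpha)^{\lfloor n/(gu)\rfloor+\lfloor n/(gv)\rfloor}\Big((1-\alpha)^{-\lfloor n/(guv)\rfloor}-1\Big),
\]
and extract the $n^3$-term from each inner sum. The dominant range is $g\asymp n/(uv)$, where the three floors $\lfloor n/(gu)\rfloor,\lfloor n/(gv)\rfloor,\lfloor n/(guv)\rfloor$ are piecewise constant in $g$; on each interval of constancy I would replace the arithmetic factor $\varphi(g)\varphi(guv)$ by its average density $3c_{uv}g^2$ coming from the cubic mean value $\sum_{g\le x}\varphi(g)\varphi(guv)\sim c_{uv}x^3$, convert the $g$-sum to an integral, and assemble the pieces. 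This produces $\mathbb{V}[X]=(\mathrm v(\alpha)+o(1))\,n^3$ with $\mathrm v(\alpha)$ an explicit convergent sum over coprime pairs $(u,v)$ of integrals of powers of $(1-\alpha)$ against $c_{uv}s^2$; positivity of every covariance forces $\mathrm v(\alpha)>0$, and the $(uv)^{-2}$-type decay makes it finite, so $\mathrm v\colon(0,1)\to\mathbb{R}^+$ is well defined.

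I expect the error analysis of this last step to be the main obstacle. The accumulated errors arise from three sources: the remainder in the cubic mean value of $\varphi(g)\varphi(guv)$ (of size $\asymp(n/uv)^2$ times logarithmic factors), the number of intervals of constancy of the floors, and the range of $(u,v)$ that genuinely contributes, which widens to $uv\lesssim n$ and $u+v\lesssim 1/\alpha$ as $\alpha\to0$ because the decay $(1-\alpha)^{(u+v)/2}$ weakens. Controlling the product of these against the main term $\asymp\mathrm v(\alpha)\,n^3$ (where $\mathrm v(\alpha)\asymp\alpha$ as $\alpha\to0$) is exactly what forces a lower bound on $\alpha$; tracking the logarithmic losses carefully is what I expect to yield the stated hypothesis $\alpha n/\big((\log n)^3\log\log n\big)\to+\infty$. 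A secondary technical point is to maintain uniformity in $\alpha$ throughout, so that $\mathrm v$ is a single function on $(0,1)$ rather than a family of $n$-dependent limits.
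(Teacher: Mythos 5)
Your proposal is correct and takes essentially the same route as the paper: the same identity $X=\sum_{1<d\le n}\varphi(d)Y_d$, the same covariance formula, the same reparameterization by $g=(d_1,d_2)$ and coprime $(u,v)$ with a cubic mean value for $\sum_{g\le x}\varphi(g)\varphi(guv)$ (which equals the paper's $\sum_{g\le x}\varphi(ug)\varphi(vg)$), the same partition into intervals where the three floors are constant to define $\mathrm{v}(\alpha)$ as a limiting series, and the same upper-bound mechanism (Bernoulli plus retaining the constraint $[d_1,d_2]\le n$, whose necessity you rightly stress). The error analysis you defer is exactly what the paper carries out — a master bound of size $\ll(\log(1/\alpha)+1)/\alpha$ for the weighted sum over $(\bm{a},\bm{j})$, combined with $\sigma(m)\ll m\log\log m$ — and it is indeed what produces the hypothesis $\alpha n/\big((\log n)^3\log\log n\big)\to+\infty$.
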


As a consequence of Theorem~\ref{thm:expectation} and Theorem~\ref{thm:variance}, we obtain the following $q$-analog of Theorem~\ref{thm:cilleruelo}.

\begin{theorem}\label{thm:Aq}
Let $\mathcal{A}$ be a random set in $\mathcal{B}(n, \alpha)$.
Then, as $\alpha n \to +\infty$, we have
\begin{equation*}
\deg \lcm\!\big([\mathcal{A}]_q\big) \sim \frac{3}{\pi^2} \cdot \frac{\alpha \Li_2(1 - \alpha)}{1 - \alpha} \cdot n^2 ,
\end{equation*}
with probability $1 - o(1)$, where the factor involving $\alpha$ is meant to be equal to $1$ for $\alpha = 1$.
\end{theorem}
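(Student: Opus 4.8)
The plan is to derive the almost sure asymptotic from the first two moments computed in Theorem~\ref{thm:expectation} and Theorem~\ref{thm:variance}, by a standard second-moment (Chebyshev) argument. Set $C(\alpha) := \frac{3}{\pi^2}\cdot\frac{\alpha\,\Li_2(1-\alpha)}{1-\alpha}$, so that Theorem~\ref{thm:expectation} gives $\mathbb{E}[X]\sim C(\alpha)\,n^2$ uniformly for $\alpha\in(0,1]$ as $n\to+\infty$; note that $\alpha\le 1$ forces $n\to+\infty$ whenever $\alpha n\to+\infty$, so this asymptotic is available throughout our regime. For a fixed $\varepsilon>0$, Chebyshev's inequality gives
\[
\mathbb{P}\big(\,|X-\mathbb{E}[X]|\ge\varepsilon\,\mathbb{E}[X]\,\big)\ \le\ \frac{\mathbb{V}[X]}{\varepsilon^{2}\,\mathbb{E}[X]^{2}},
\]
and it suffices to show that the right-hand side tends to $0$ as $\alpha n\to+\infty$.

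The key quantitative input is the two-sided bound $C(\alpha)\asymp\alpha$, uniform on $(0,1]$. Indeed, $C(\alpha)/\alpha=\frac{3}{\pi^2}\cdot\frac{\Li_2(1-\alpha)}{1-\alpha}$ extends to a continuous, strictly positive function on the closed interval $[0,1]$: as $\alpha\to0^{+}$ it tends to $\frac{3}{\pi^2}\Li_2(1)=\tfrac12$, whereas the expansion $\Li_2(z)=z+O(z^2)$ as $z\to0$ gives $\frac{\Li_2(1-\alpha)}{1-\alpha}\to1$ as $\alpha\to1^{-}$, in agreement with the convention $C(1)=\frac{3}{\pi^2}$. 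Hence there are absolute constants $0<c_1<c_2$ with $c_1\alpha\le C(\alpha)\le c_2\alpha$ for all $\alpha\in(0,1]$, and consequently $\mathbb{E}[X]^2\gg\alpha^2 n^4$ for all sufficiently large $n$.

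Feeding this lower bound for $\mathbb{E}[X]^2$ together with the universal variance estimate $\mathbb{V}[X]\ll\alpha n^3$ from~\eqref{equ:variance_upper} into Chebyshev's inequality, we obtain
\[
\frac{\mathbb{V}[X]}{\mathbb{E}[X]^{2}}\ \ll\ \frac{\alpha n^3}{\alpha^2 n^4}\ =\ \frac{1}{\alpha n}\ \longrightarrow\ 0 .
\]
It is crucial here that~\eqref{equ:variance_upper} holds for every $n$ and every $\alpha$, so that no hypothesis stronger than $\alpha n\to+\infty$ is required; in particular the delicate asymptotic~\eqref{equ:variance_asymp}, valid only for $\alpha n/\big((\log n)^3\log\log n\big)\to+\infty$, is not needed. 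Thus $\mathbb{P}(|X-\mathbb{E}[X]|\ge\varepsilon\,\mathbb{E}[X])\to0$ for every $\varepsilon>0$, that is $X=(1+o(1))\,\mathbb{E}[X]$ with probability $1-o(1)$; combining this with $\mathbb{E}[X]\sim C(\alpha)\,n^2$ yields the claimed formula.

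The argument carries essentially no difficulty of its own: all the substantive work has already been done in establishing the sharp mean and, above all, the uniform variance bound. The only point demanding attention is the uniform two-sided estimate $C(\alpha)\asymp\alpha$, since the Chebyshev bound $1/(\alpha n)$ would fail to vanish if $C(\alpha)$ could decay faster than a constant multiple of $\alpha$; this is precisely where the finite nonzero limits of the dilogarithm factor at both endpoints $\alpha=0$ and $\alpha=1$ are used.
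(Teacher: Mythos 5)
Your proposal is correct and follows essentially the same route as the paper: Chebyshev's inequality combined with the uniform expectation asymptotic of Theorem~\ref{thm:expectation} and the universal variance bound~\eqref{equ:variance_upper}, yielding the error probability $\ll 1/(\varepsilon^2 \alpha n) \to 0$. Your explicit verification that $C(\alpha) \asymp \alpha$ uniformly on $(0,1]$ (via the endpoint limits of the dilogarithm factor) is a point the paper's proof uses implicitly but does not spell out, so your write-up is, if anything, slightly more careful.
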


We remark that in Theorem~\ref{thm:Aq} the condition $\alpha n \to +\infty$ is necessary.
Indeed, if $\alpha n \leq C$, for some constant $C > 0$, then 
\begin{equation*}
\mathbb{P}[\mathcal{A} = \varnothing] = (1 - \alpha)^n \geq \left(1 - \frac{C}{n}\right)^n \to e^C
\end{equation*}
as $n \to +\infty$, and so no (nontrivial) asymptotic formula for $\deg \lcm\!\big([\mathcal{A}]_q\big)$ can hold with probability $1 - o(1)$.

\section{Notation}

We employ the Landau--Bachmann ``Big Oh'' and ``little oh'' notations $O$ and $o$, as well as the associated Vinogradov symbol $\ll$, with their usual meanings.
Any dependence of the implied constants is explicitly stated or indicated with subscripts.
For real random variables $X$ and $Y$, we say that ``$X \sim Y$ with probability $1 - o(1)$'' if $\mathbb{P}\big[\,|X - Y| > \varepsilon|Y|\,\big] = o_\varepsilon(1)$ for every $\varepsilon > 0$.
We let $(a,b)$ and $[a,b]$ denote the greatest common divisor and the least common multiple, respectively, of two integers $a$ and $b$.
As usual, we write $\varphi(n)$, $\mu(n)$, $\tau(n)$, and $\sigma(n)$, for the Euler totient function, the M\"obius function, the number of divisors, and the sum of divisors, of a positive integer $n$, respectively.

\section{Preliminaries}

In this section we collect some preliminary results needed in later arguments.

\begin{lemma}\label{lem:tau}
We have
\begin{equation*}
\sum_{m \,\leq\, x} \tau(m) \ll x \log x ,
\end{equation*}
for every $x \geq 2$.
\end{lemma}
\begin{proof}
See, e.g.,~\cite[Theorem~3.2]{MR3363366}.
\end{proof}

\begin{lemma}\label{lem:lcme1e2}
We have
\begin{equation*}
\sum_{[e_1\!,\, e_2] \,>\, x} \frac1{e_1 e_2 [e_1, e_2]} \ll \frac{\log x}{x}
\end{equation*}
for every $x \geq 2$.
\end{lemma}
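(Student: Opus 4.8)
The plan is to reduce the double sum to a one-dimensional tail sum weighted by the divisor function $\tau$, and then to invoke Lemma~\ref{lem:tau} via partial summation. Since every term is positive and we only seek an upper bound, I can afford to enlarge the range of summation wherever convenient.

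First I would parametrize the pairs $(e_1, e_2)$ by their greatest common divisor. Writing $d := (e_1, e_2)$ and $e_i = d\,a_i$ with $(a_1, a_2) = 1$, we have $[e_1, e_2] = d\,a_1 a_2$, hence $e_1 e_2 [e_1, e_2] = d^3 a_1^2 a_2^2$, while the constraint $[e_1, e_2] > x$ becomes $d\, a_1 a_2 > x$. Dropping the coprimality condition only enlarges the (positive) sum, so
\[
\sum_{[e_1,\,e_2] \,>\, x} \frac1{e_1 e_2 [e_1, e_2]} \;\le\; \sum_{d \,\geq\, 1} \frac1{d^3} \sum_{\substack{a_1, a_2 \,\geq\, 1 \\ a_1 a_2 \,>\, x/d}} \frac1{a_1^2 a_2^2} .
\]
Next I would collapse the inner sum onto the product $m := a_1 a_2$. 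As $1/(a_1^2 a_2^2) = 1/m^2$ and the number of ordered factorizations $m = a_1 a_2$ is exactly $\tau(m)$, the inner sum equals $T(x/d)$, where $T(y) := \sum_{m \,>\, y} \tau(m)/m^2$.

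It then remains to bound the tail $T(y)$. For $y \geq 2$, partial summation combined with $\sum_{m \,\leq\, t} \tau(m) \ll t \log t$ from Lemma~\ref{lem:tau} gives
\[
T(y) \;\ll\; \int_y^\infty \frac{\log t}{t^2} \, dt \;\ll\; \frac{\log y}{y} ,
\]
whereas for every $y > 0$ one has the crude bound $T(y) \leq \sum_{m \,\geq\, 1} \tau(m)/m^2 = \zeta(2)^2 \ll 1$. Finally I would split the $d$-sum at $d = x/2$. For $d \leq x/2$ we have $x/d \geq 2$, so $T(x/d) \ll (d \log x)/x$, and summing against $d^{-3}$ leaves the convergent factor $\sum_{d} d^{-2}$, contributing $\ll (\log x)/x$. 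For $d > x/2$ we use $T(x/d) \ll 1$ together with $\sum_{d \,>\, x/2} d^{-3} \ll x^{-2} \ll (\log x)/x$, valid for $x \geq 2$. Adding the two ranges yields the claimed estimate.

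The only delicate point is the tail bound for $T(y)$: one must keep track of the boundary term arising in the partial summation (which is negative, hence harmless), and, more importantly, one must supply the separate crude bound $T(y) \ll 1$ so that the large-$d$ range, where $x/d$ drops below $2$, is controlled uniformly in $x$. Everything else is a routine manipulation of absolutely convergent series of positive terms.
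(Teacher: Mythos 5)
Your proposal is correct and follows essentially the same route as the paper's proof: parametrize by the gcd, drop coprimality, collapse the inner sum to $\sum_{m>y}\tau(m)/m^2$, bound that tail by partial summation from Lemma~\ref{lem:tau}, and split the outer sum at $x/2$ with a crude $O(1)$ bound for the large-gcd range. Your write-up is, if anything, slightly more careful than the paper's (explicitly noting the sign of the boundary term and the uniform bound $T(y)\ll 1$), but the argument is the same.
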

\begin{proof}
From Lemma~\ref{lem:tau} and partial summation, it follows that
\begin{equation*}
\sum_{m \,>\, x} \frac{\tau(m)}{m^2} \ll \frac{\log x}{x} .
\end{equation*}
Let $e := (e_1, e_2)$ and $e_1^\prime := e_i / e$ for $i=1,2$.
Then we have
\begin{align*}
\sum_{[e_1\!,\, e_2] \,>\, x} \frac1{e_1 e_2 [e_1, e_2]} &\leq \sum_{e \,\geq\, 1} \frac1{e^3} \sum_{e_1^\prime e_2^\prime \,>\, x / e} \frac1{(e_1^\prime e_2^\prime)^2} = \sum_{e \,\geq\, 1} \frac1{e^3} \sum_{m \,>\, x / e} \frac{\tau(m)}{m^2} \\
&\ll \sum_{e \,\leq\, x / 2} \frac1{e^3} \frac{\log(x/e)}{x/e} + \sum_{e \,>\, x / 2} \frac1{e^3} \ll \frac{\log x}{x} + \frac1{x^2} \ll \frac{\log x}{x} ,
\end{align*}
as desired.
\end{proof}

Let us define
\begin{equation*}
\Phi(x) := \sum_{n \,\leq\, x} \varphi(n) \quad\text{ and }\quad \Phi(a_1, a_2; x) := \sum_{n \,\leq\, x} \varphi(a_1 n)\, \varphi(a_2 n) ,
\end{equation*}
for every $x \geq 1$ and for all positive integers $a_1, a_2$.

\begin{lemma}\label{lem:Phi}
For every $x \geq 2$, we have
\begin{equation*}
\Phi(x) = \frac{3}{\pi^2} \, x^2 + O(x \log x) ,
\end{equation*}
\end{lemma}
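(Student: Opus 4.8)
The plan is to exploit the Dirichlet convolution identity $\varphi = \mu * \mathrm{Id}$, that is, $\varphi(n) = \sum_{d \mid n} \mu(d)\,(n/d)$, and then interchange the order of summation. Writing each $n \leq x$ as $n = dm$ with $d \mid n$, I would obtain
\[
\Phi(x) = \sum_{n \,\leq\, x} \sum_{d \mid n} \mu(d)\,\frac{n}{d} = \sum_{d \,\leq\, x} \mu(d) \sum_{m \,\leq\, x/d} m .
\]

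Next I would evaluate the inner sum using $\sum_{m \leq y} m = \tfrac{1}{2} y^2 + O(y)$ with $y = x/d$, which gives
\[
\Phi(x) = \frac{x^2}{2} \sum_{d \,\leq\, x} \frac{\mu(d)}{d^2} + O\!\left( x \sum_{d \,\leq\, x} \frac{1}{d} \right) .
\]
The error term here is $O(x \log x)$, since $\sum_{d \leq x} 1/d \ll \log x$, and this is already precisely the size of the claimed error. For the main term, I would complete the partial sum $\sum_{d \leq x} \mu(d)/d^2$ to the full series $\sum_{d \geq 1} \mu(d)/d^2 = 1/\zeta(2) = 6/\pi^2$, controlling the tail by the trivial bound $\big|\sum_{d > x} \mu(d)/d^2\big| \leq \sum_{d > x} 1/d^2 \ll 1/x$. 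Thus $\tfrac{x^2}{2}\sum_{d \leq x}\mu(d)/d^2 = \tfrac{3}{\pi^2} x^2 + O(x)$, and combining with the error term yields the desired formula, since the $O(x)$ is absorbed into $O(x \log x)$.

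This argument is entirely routine, and there is no genuine obstacle beyond careful bookkeeping of the two error contributions. The only points worth recording explicitly are the identity $\varphi = \mu * \mathrm{Id}$ and the evaluation $\sum_{d \geq 1} \mu(d)/d^2 = 6/\pi^2$, both of which are standard. Should a sharper error term ever be required, one could replace the elementary tail estimate on $\sum_{d > x} \mu(d)/d^2$ by a bound derived from the prime number theorem, but for the stated $O(x \log x)$ the elementary treatment is amply sufficient.
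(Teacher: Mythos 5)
Your proof is correct. Note, however, that the paper does not actually prove this lemma: it simply cites Tenenbaum, \emph{Introduction to Analytic and Probabilistic Number Theory}, Theorem~3.4, where the standard proof is exactly the convolution argument you give (write $\varphi = \mu * \mathrm{Id}$, swap the order of summation, evaluate $\sum_{m \leq x/d} m$, and complete $\sum_{d \leq x} \mu(d)/d^2$ to $1/\zeta(2) = 6/\pi^2$ with a tail bound of $O(1/x)$). So your write-up is a correct, self-contained version of the very argument behind the paper's citation; all steps — the $O(x\log x)$ error from the harmonic sum and the $O(x)$ error from the tail — are handled properly.
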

\begin{proof}
See, e.g.,~\cite[Theorem~3.4]{MR3363366}.
\end{proof}

\begin{lemma}\label{lem:Phia1a2}
We have
\begin{equation}\label{equ:Phia1a2}
\Phi(a_1, a_2; x) = C_1(a_1, a_2) \, x^3 + O\big(\sigma(a_1 a_2) \,x^2 (\log x)^2\big) ,
\end{equation}
for every $x \geq 2$, where
\begin{equation}\label{equ:C1a1a2}
C_1(a_1, a_2) := \frac{a_1 a_2}{3}\sum_{d_1\!,\, d_2 \,\geq\, 1} \frac{\mu(d_1)\mu(d_2)}{d_1 d_2 [d_1 / (a_1, d_1), d_2 / (a_2, d_2)]}
\end{equation}
and the series is absolutely convergent.
\end{lemma}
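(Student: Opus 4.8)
The plan is to expand both totients through the M\"obius function and then interchange the order of summation. Starting from $\varphi(a_i n) = a_i n \sum_{d_i \,\mid\, a_i n} \mu(d_i)/d_i$ for $i = 1, 2$, multiplying the two expressions, and summing over $n \leq x$, I would obtain
\begin{equation*}
\Phi(a_1, a_2; x) = a_1 a_2 \sum_{d_1,\, d_2 \,\geq\, 1} \frac{\mu(d_1)\,\mu(d_2)}{d_1 d_2} \sum_{\substack{n \,\leq\, x \\ d_1 \,\mid\, a_1 n,\ d_2 \,\mid\, a_2 n}} n^2 .
\end{equation*}
The first key point is that $d \mid a n$ holds if and only if $e \mid n$, where $e := d / (d, a)$; hence, writing $e_i := d_i / (a_i, d_i)$, the two inner divisibility conditions merge into the single condition $[e_1, e_2] \mid n$, and only the pairs $(d_1, d_2)$ with $[e_1, e_2] \leq x$ contribute a nonempty inner sum.

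Next I would evaluate the inner sum. Setting $L := [e_1, e_2]$ and writing $n = Lm$, the elementary estimate $\sum_{m \,\leq\, y} m^2 = y^3/3 + O(y^2)$ gives
\begin{equation*}
\sum_{\substack{n \,\leq\, x \\ L \,\mid\, n}} n^2 = \frac{x^3}{3L} + O(x^2)
\end{equation*}
uniformly for $L \leq x$. Substituting this back, the leading contribution is $\tfrac13 a_1 a_2 x^3 \sum_{[e_1, e_2] \,\leq\, x} \mu(d_1)\mu(d_2) / (d_1 d_2 [e_1, e_2])$, and completing the truncated series to a sum over all $d_1, d_2 \geq 1$ reproduces exactly the constant $C_1(a_1, a_2)$ of~\eqref{equ:C1a1a2}, so that the main term equals $C_1(a_1, a_2)\, x^3$.

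The bulk of the work, and the main obstacle, is the error analysis, since the interchange of summation has replaced a finite sum by series whose naive tails diverge. Two contributions must be bounded: the accumulated $O(x^2)$ terms over the range $[e_1, e_2] \leq x$, and the tail $[e_1, e_2] > x$ discarded when the main series is completed. For both I would use the parametrization $d_i = g_i e_i$ with $g_i \mid a_i$ and $(e_i, a_i / g_i) = 1$, which factors out divisor sums $\sum_{g_i \,\mid\, a_i} 1/g_i$, carrying the dependence on $a_1$ and $a_2$ that yields the $\sigma(a_1 a_2)$ in~\eqref{equ:Phia1a2}, and leaves sums over $e_1, e_2$. The tail $\sum_{[e_1, e_2] \,>\, x} 1/(e_1 e_2 [e_1, e_2])$ is then controlled by Lemma~\ref{lem:lcme1e2}, while the double sum $\sum_{[e_1, e_2] \,\leq\, x} 1/(e_1 e_2) \ll (\log x)^2$, bounded via Lemma~\ref{lem:tau} and partial summation, supplies the $(\log x)^2$ factor. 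The same computation applied to the untruncated series $\sum_{d_1, d_2} 1/(d_1 d_2 [e_1, e_2])$, whose sum over $e_1, e_2$ is a convergent combination of values of the Riemann zeta function, establishes the absolute convergence of the series defining $C_1(a_1, a_2)$.
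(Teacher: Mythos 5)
Your proposal is correct and is essentially the paper's own argument: the same M\"obius expansion of $\varphi(a_i n)/(a_i n)$, the same key reduction $d_i \mid a_i n \iff e_i \mid n$ with $e_i = d_i/(a_i, d_i)$, the same truncation at $[e_1, e_2] \leq x$ with Lemma~\ref{lem:lcme1e2} controlling the tail, and the same factoring over divisors of $a_1, a_2$ to produce the $\sigma$ factor. The only difference is bookkeeping: you keep the weight $n^2$ inside and evaluate $\sum_{n \leq x,\, L \mid n} n^2 = x^3/(3L) + O(x^2)$ directly, whereas the paper first proves the linear asymptotic for the normalized sum $\sum_{n \leq x} \varphi(a_1 n)\varphi(a_2 n)/(a_1 a_2 n^2)$ and then recovers $\Phi(a_1, a_2; x)$ by partial summation (incidentally, your bound $\sum_{[e_1, e_2] \leq x} 1/(e_1 e_2) \ll (\log x)^2$ needs no appeal to Lemma~\ref{lem:tau}, since $[e_1,e_2] \leq x$ forces $e_1, e_2 \leq x$ and the harmonic-sum bound suffices).
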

\begin{proof}
From the identity $\varphi(n) / n = \sum_{d \,\mid\;\;\!\!\! n} \mu(d) / d$, it follows that
\begin{align*}
\sum_{n \,\leq\, x} \frac{\varphi(a_1 n)}{a_1 n} \,\frac{\varphi(a_2 n)}{a_2 n} &= \sum_{n \,\leq\, x} \left(\sum_{d_1 \,\mid\, a_1 n} \frac{\mu(d_1)}{d_1} \sum_{d_2 \,\mid\, a_2 n} \frac{\mu(d_2)}{d_2} \right) \\
 &= \sum_{\substack{d_1 \,\leq\, a_1 x \\[2pt] d_2 \,\leq\, a_2 x}} \frac{\mu(d_1)}{d_1} \, \frac{\mu(d_2)}{d_2}\, \#\big\{n \leq x : d_1 \mid a_1 n \text{ and } d_2 \mid a_2 n \big\} \\
 &= \sum_{[d_1^\prime\!,\, d_2^\prime] \,\leq\, x} \frac{\mu(d_1)}{d_1} \, \frac{\mu(d_2)}{d_2} \left(\frac{x}{[d_1^\prime, d_2^\prime]} + O(1)\right) ,
\end{align*}
where $d_i^\prime := d_i / (a_i, d_i)$ for $i = 1, 2$.
On the one hand, we have
\begin{equation*}
\sum_{[d_1^\prime\!,\, d_2^\prime] \,\leq\, x} \frac1{d_1 d_2} \leq \sum_{c \,\mid\, a_1 a_2} \frac1{c} \sum_{e_1\!,\, e_2 \,\leq\, x} \frac1{e_1 e_2} \ll \frac{\sigma(a_1 a_2)}{a_1 a_2} \, (\log x)^2 .
\end{equation*}
On the other hand, thanks to Lemma~\ref{lem:lcme1e2}, we have
\begin{equation*}
\sum_{[d_1^\prime\!,\, d_2^\prime] \,>\, x} \frac1{d_1 d_2 [d_1^\prime, d_2^\prime]} \leq \sum_{c \,\mid\, a_1 a_2} \frac1{c} \sum_{[e_1\!,\, e_2] \,>\, x} \frac1{e_1 e_2 [e_1, e_2]} \ll \frac{\sigma(a_1 a_2)}{a_1 a_2} \,\frac{\log x}{x} ,
\end{equation*}
which, in particular, implies that series~\eqref{equ:C1a1a2} is absolutely convergent.
Therefore, letting $C_0(a_1, a_2) := 3 C_1(a_1, a_2) / (a_1 a_2)$, we obtain
\begin{align}\label{equ:Phia1a2norm}
\sum_{n \,\leq\, x} \frac{\varphi(a_1 n)}{a_1 n} \,\frac{\varphi(a_2 n)}{a_2 n} &= \left(\!C_0(a_1, a_2) + O\!\!\left(\sum_{[d_1^\prime\!,\, d_2^\prime] \,>\, x} \frac1{d_1 d_2 [d_1, d_2]}\right)\!\!\right) x + O\!\!\left(\sum_{[d_1^\prime\!,\, d_2^\prime] \,\leq\, x} \frac1{d_1 d_2}\right) \\
 &= C_0(a_1, a_2) \, x + O\!\left(\frac{\sigma(a_1 a_2)}{a_1 a_2} \,(\log x)^2\right) . \nonumber
\end{align}
Now~\eqref{equ:Phia1a2} follows easily from~\eqref{equ:Phia1a2norm} by partial summation.
\end{proof}

\begin{remark}\label{rmk:C1a1a2}
The obvious bound $\varphi(m) \leq m$ yields $C_1(a_1, a_2) \leq a_1 a_2 / 3$ (which is not so obvious from~\eqref{equ:C1a1a2}).
\end{remark}

The following lemma is an easy inequality that will be useful later.

\begin{lemma}\label{lem:Bernoulli}
It holds $1 - (1 - x)^k \leq k x$, for all $x \in [0, 1]$ and all integers $k \geq 0$.
\end{lemma}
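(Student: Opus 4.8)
The plan is to exploit the elementary factorization of $1 - t^k$ as a geometric series. Writing $t = 1 - x$, I would use the identity
\begin{equation*}
1 - (1 - x)^k = \big(1 - (1 - x)\big)\sum_{j=0}^{k-1} (1 - x)^j = x \sum_{j=0}^{k-1} (1 - x)^j ,
\end{equation*}
valid for every integer $k \geq 1$ (and trivially for $k = 0$, where both sides vanish). The key observation is that, since $x \in [0, 1]$, we have $0 \leq 1 - x \leq 1$, and hence $(1 - x)^j \leq 1$ for each $j \geq 0$. Bounding each of the $k$ summands by $1$ then gives $\sum_{j=0}^{k-1}(1 - x)^j \leq k$, from which $1 - (1 - x)^k \leq kx$ follows immediately.

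Alternatively, I could argue by induction on $k$. The base case $k = 0$ is trivial, and in the inductive step I would write $1 - (1 - x)^{k+1} = \big(1 - (1 - x)^k\big) + x(1 - x)^k$, apply the inductive hypothesis to the first term, and bound $(1 - x)^k \leq 1$ in the second. A third route is simply to recognize the claim as Bernoulli's inequality $(1 + y)^k \geq 1 + ky$ applied with $y = -x \geq -1$.

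There is essentially no obstacle here: the only point requiring a moment's care is the nonnegativity of $1 - x$ on $[0, 1]$, which is precisely what licenses the bound $(1 - x)^j \leq 1$; without the hypothesis $x \leq 1$ the factor could exceed $1$ and the argument would collapse. I would favor the geometric-series factorization, since it is the most transparent and disposes of all $k \geq 0$ and all $x \in [0, 1]$ simultaneously.
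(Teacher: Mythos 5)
Your proof is correct, and your preferred route differs from the paper's. The paper disposes of the lemma in one line: it rewrites the claim as $(1 + (-x))^k \geq 1 + k(-x)$ and cites Bernoulli's inequality, exactly the ``third route'' you mention in passing. Your favored argument via the factorization
\begin{equation*}
1 - (1 - x)^k = x \sum_{j=0}^{k-1} (1 - x)^j \leq kx
\end{equation*}
is instead fully self-contained: it does not invoke any named inequality, and in fact it \emph{is} the standard proof of Bernoulli's inequality for integer exponents in this range, so you have effectively reproved the black box the paper appeals to. What the paper's approach buys is brevity and a clean reduction to a classical fact; what yours buys is transparency (the reader sees exactly where the hypothesis $x \in [0,1]$ enters, namely in bounding $(1-x)^j \leq 1$) and uniform treatment of all $k \geq 0$ without any case distinction. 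Both are complete and correct proofs of the statement.
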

\begin{proof}
The claim is $(1 + (-x))^k \geq 1 + k(-x)$, which follows from Bernoulli's inequality.
\end{proof}

\section{Proofs}

Henceforth, let $\mathcal{A}$ be a random set in $\mathcal{B}(n, \alpha)$, let $[\mathcal{A}]_q$ be its $q$-analog, and put $L := \lcm\!\big([\mathcal{A}]_q\big)$ and $X := \deg L$.
For every positive integer $d$, let us define
\begin{equation*}
I_{\mathcal{A}}(d) := \begin{cases} 1 & \text{ if } d \mid k \text{ for some } k \in \mathcal{A}; \\ 0 & \text{ otherwise.} \end{cases}
\end{equation*}
The following lemma gives a formula for $X$ in terms of $I_{\mathcal{A}}$ and the Euler function.

\begin{lemma}\label{lem:XsumphiIA}
We have
\begin{equation}\label{equ:XsumphiIA}
X = \sum_{1 \,<\, d \,\leq\, n} \varphi(d)\, I_{\mathcal{A}}(d) .
\end{equation}
\end{lemma}
\begin{proof}
For every positive integer $k$, it holds
\begin{equation*}
[k]_q = \frac{q^k - 1}{q - 1} = \prod_{\substack{d \,\mid\!\; k \\ d \,>\, 1}} \Phi_d(q) ,
\end{equation*}
where $\Phi_d(q)$ is the $d$th cyclotomic polynomials.
Since, as it is well known, every cyclotomic polynomial is irreducible over $\mathbb{Q}$, it follows that $L$ is the product of the polynomials $\Phi_d(q)$ such that $d > 1$ and $d \mid k$ for some $k \in \mathcal{A}$.
Finally, the equality $\deg\!\big(\Phi_d(q)\big) = \varphi(d)$ and the definition of $I_{\mathcal{A}}$ yield~\eqref{equ:XsumphiIA}.
\end{proof}

Let $\beta := 1 - \alpha$.
The next lemma provides two expected values involving $I_{\mathcal{A}}$.

\begin{lemma}\label{lem:EIAd}
For all positive integers $d, d_1, d_2$, we have
\begin{equation}\label{equ:EIAd}
\mathbb{E}\big[I_\mathcal{A}(d)\big] = 1 - \beta^{\lfloor n / d\rfloor}
\end{equation}
and
\begin{align*}
\mathbb{E}\big[I_\mathcal{A}(d_1)I_\mathcal{A}(d_2)\big] = 1 - \beta^{\lfloor n / d_1 \rfloor}  - \beta^{\lfloor n / d_2 \rfloor} + \beta^{\lfloor n / d_1 \rfloor + \lfloor n / d_2 \rfloor - \lfloor n / [d_1\!,\, d_2] \rfloor} .
\end{align*}
\end{lemma}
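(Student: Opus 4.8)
The plan is to compute both expected values directly from the definition of the probabilistic model $\mathcal{B}(n,\alpha)$, using the fact that $I_{\mathcal{A}}(d)$ is a Bernoulli-type indicator governed by which multiples of $d$ land in $\mathcal{A}$. The key observation is that $I_{\mathcal{A}}(d) = 0$ precisely when none of the multiples $d, 2d, \dots, \lfloor n/d\rfloor d$ of $d$ in $\{1,\dots,n\}$ is chosen. Since each element of $\{1,\dots,n\}$ is included independently with probability $\alpha = 1 - \beta$, and there are exactly $\lfloor n/d\rfloor$ such multiples, we get $\mathbb{P}[I_{\mathcal{A}}(d) = 0] = \beta^{\lfloor n/d\rfloor}$. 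Hence $\mathbb{E}[I_{\mathcal{A}}(d)] = \mathbb{P}[I_{\mathcal{A}}(d) = 1] = 1 - \beta^{\lfloor n/d\rfloor}$, which is~\eqref{equ:EIAd}.

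For the second identity I would use inclusion--exclusion on the complementary events. Writing $I_{\mathcal{A}}(d_1) I_{\mathcal{A}}(d_2)$ as the indicator that both events occur, I pass to complements via
\begin{equation*}
I_{\mathcal{A}}(d_1) I_{\mathcal{A}}(d_2) = 1 - \big(1 - I_{\mathcal{A}}(d_1)\big) - \big(1 - I_{\mathcal{A}}(d_2)\big) + \big(1 - I_{\mathcal{A}}(d_1)\big)\big(1 - I_{\mathcal{A}}(d_2)\big) .
\end{equation*}
Taking expectations, the first two complementary terms contribute $\beta^{\lfloor n/d_1\rfloor}$ and $\beta^{\lfloor n/d_2\rfloor}$ by the first part. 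The crux is the last term: $\big(1 - I_{\mathcal{A}}(d_1)\big)\big(1 - I_{\mathcal{A}}(d_2)\big)$ is the indicator that \emph{no} multiple of $d_1$ and \emph{no} multiple of $d_2$ is selected, i.e.\ that $\mathcal{A}$ avoids the union of the multiples of $d_1$ and the multiples of $d_2$ in $\{1,\dots,n\}$.

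The main point to get right is thus the cardinality of that union. By inclusion--exclusion on sets, the number of integers in $\{1,\dots,n\}$ that are multiples of $d_1$ \emph{or} $d_2$ equals $\lfloor n/d_1\rfloor + \lfloor n/d_2\rfloor - \lfloor n/[d_1,d_2]\rfloor$, since an integer is a common multiple of $d_1$ and $d_2$ exactly when it is a multiple of their least common multiple $[d_1,d_2]$. Because the elements are chosen independently, the probability that all of these are avoided is $\beta$ raised to this count, giving
\begin{equation*}
\mathbb{E}\big[\big(1 - I_{\mathcal{A}}(d_1)\big)\big(1 - I_{\mathcal{A}}(d_2)\big)\big] = \beta^{\lfloor n/d_1\rfloor + \lfloor n/d_2\rfloor - \lfloor n/[d_1,d_2]\rfloor} .
\end{equation*}
Substituting these three evaluations back into the expectation of the inclusion--exclusion identity yields the claimed formula. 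I do not anticipate a serious obstacle here; the only subtle step is recognizing that the relevant set for the product term is the union of the two arithmetic progressions, whose size is controlled by $[d_1,d_2]$, and this is precisely where the least common multiple enters the exponent.
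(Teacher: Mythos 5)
Your proof is correct and follows essentially the same route as the paper: the same complementation identity $I_{\mathcal{A}}(d_1)I_{\mathcal{A}}(d_2) = 1 - (1-I_{\mathcal{A}}(d_1)) - (1-I_{\mathcal{A}}(d_2)) + (1-I_{\mathcal{A}}(d_1))(1-I_{\mathcal{A}}(d_2))$, and the same counting of the union of the two arithmetic progressions via inclusion--exclusion, with $[d_1,d_2]$ governing the overlap.
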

\begin{proof}
On the one hand, by the definition of $I_{\mathcal{A}}$, we have
\begin{equation*}
\mathbb{E}\big[I_{\mathcal{A}}(d)\big] = \mathbb{P}\big[\exists k \in \mathcal{A} : d \mid k\big] = 1 - \mathbb{P}\left[\bigwedge_{m \,\leq\, \lfloor n / d\rfloor} (dm \notin \mathcal{A})\right] = 1 - \beta^{\lfloor n / d \rfloor} ,
\end{equation*}
which is~\eqref{equ:EIAd}.
On the other hand, by linearity of the expectation and by~\eqref{equ:EIAd}, we have
\begin{align*}
\mathbb{E}\big[I_\mathcal{A}(d_1)I_\mathcal{A}(d_2)\big] &= \mathbb{E}\big[I_\mathcal{A}(d_1) + I_\mathcal{A}(d_2) - 1 + \big(1 - I_\mathcal{A}(d_1)\big)\big(1 - I_\mathcal{A}(d_2)\big)\big] \\
 &= \mathbb{E}\big[I_\mathcal{A}(d_1)\big] + \mathbb{E}\big[I_\mathcal{A}(d_2)\big] - 1 + \mathbb{E}\big[\big(1 - I_\mathcal{A}(d_1)\big)\big(1 - I_\mathcal{A}(d_2)\big)\big] \\
 &= 1 - \beta^{\lfloor n / d_1 \rfloor}  - \beta^{\lfloor n / d_2 \rfloor} + \mathbb{E}\big[\big(1 - I_\mathcal{A}(d_1)\big)\big(1 - I_\mathcal{A}(d_2)\big)\big] ,
\end{align*}
where the last expected value can be computed as
\begin{align*}
\mathbb{E}\big[\big(1 - I_\mathcal{A}(d_1)\big)&\big(1 - I_\mathcal{A}(d_2)\big)\big] = \mathbb{P}\big[\forall k \in \mathcal{A} : d_1 \nmid k \text{ and } d_2 \nmid k\big] \\
 &= \mathbb{P}\left[\bigwedge_{\substack{k \,\leq\, n \\ d_1 \,\mid\, k \text{ or } d_2 \,\mid\, k}}(k \notin \mathcal{A})\right] = \beta^{\lfloor n / d_1 \rfloor + \lfloor n / d_2 \rfloor - \lfloor n / [d_1\!,\, d_2] \rfloor} ,
\end{align*}
and second claim follows.
\end{proof}

We are ready to compute the expected value of $X$.

\begin{proof}[Proof of Theorem~\ref{thm:expectation}]
From Lemma~\ref{lem:XsumphiIA} and Lemma~\ref{lem:EIAd}, it follows that
\begin{equation}\label{equ:EX1}
\mathbb{E}[X] = \sum_{1 \,<\, d \,\leq\, n} \varphi(d)\, \mathbb{E}\big[I_{\mathcal{A}}(d)\big] = \sum_{1 \,<\, d \,\leq\, n} \varphi(d) \big(1 - \beta^{\lfloor n / d \rfloor}\big) .
\end{equation}
Moreover, since $\lfloor n / d \rfloor = j$ if and only if $n / (j + 1) < d \leq n / j$, we get that
\begin{align}\label{equ:EX2}
\sum_{d \,\leq\, n} \varphi(d) \big(1 - \beta^{\lfloor n / d \rfloor}\big) &= \sum_{j \,\leq\, n} (1 - \beta^j) \sum_{n / (j + 1) \,<\, d \,\leq\, n / j} \varphi(d) \\
&= \sum_{j \,\leq\, n} (1 - \beta^j) \!\left(\Phi\!\left(\frac{n}{j}\right) - \Phi\!\left(\frac{n}{j + 1}\right)\right) \nonumber\\
&= \alpha \sum_{j \,\leq\, n} \beta^{j - 1} \Phi\!\left(\frac{n}{j}\right) \nonumber\\
&= \frac{3}{\pi^2} \cdot \alpha \sum_{j \,\leq\, n} \frac{\beta^{j-1}}{j^2} \cdot n^2 + O\!\left(\alpha \sum_{j \,\leq\, n} \frac{n}{j}\log\!\left(\frac{n}{j}\right) \right) \nonumber\\
&= \frac{3}{\pi^2} \cdot \frac{\alpha \Li_2(1 - \alpha)}{1 - \alpha} \cdot n^2 + O \big(\alpha n (\log n)^2\big) , \nonumber
\end{align}
where we used Lemma~\ref{lem:Phi}.
Putting together~\eqref{equ:EX1} and~\eqref{equ:EX2}, and noting that, by Lemma~\ref{lem:Bernoulli}, the addend of~\eqref{equ:EX2} corresponding to $d = 1$ is $1 - \beta^n = O(\alpha n)$, we get~\eqref{equ:expectation}.
The proof is complete.
\end{proof}

Now we consider the variance of $X$.

\begin{proof}[Proof of Theorem~\ref{thm:variance}]
From Lemma~\ref{lem:XsumphiIA} and Lemma~\ref{lem:EIAd}, it follows that
\begin{align}\label{equ:VX}
\mathbb{V}[X] &= \mathbb{E}\big[X^2\big] - \mathbb{E}[X]^2 \\
 &= \sum_{1 \,<\, d_1\!,\, d_2 \,\leq\, n} \varphi(d_1)\,\varphi(d_2) \Big(\mathbb{E}\big[I_{\mathcal{A}}(d_1)\, I_{\mathcal{A}}(d_2)\big] - \mathbb{E}\big[I_{\mathcal{A}}(d_1)\big]\,\mathbb{E}\big[I_{\mathcal{A}}(d_2)\big]\Big) \nonumber \\
 &= \sum_{1 \,<\, d_1\!,\, d_2 \,\leq\, n} \varphi(d_1)\,\varphi(d_2) \, \beta^{\lfloor n / d_1 \rfloor + \lfloor n / d_2 \rfloor - \lfloor n / [d_1, d_2] \rfloor} \big(1 - \beta^{\lfloor n / [d_1, d_2] \rfloor} \big) . \nonumber 
\end{align}
Let us define
\begin{equation*}
V_n(\alpha) := \frac1{n^3}\sum_{d_1\!,\, d_2 \,\leq\, n} \varphi(d_1)\,\varphi(d_2) \, \beta^{\lfloor n / d_1 \rfloor + \lfloor n / d_2 \rfloor - \lfloor n / [d_1, d_2] \rfloor} \big(1 - \beta^{\lfloor n / [d_1, d_2] \rfloor} \big) .
\end{equation*}
Clearly, we have
\begin{equation*}
V_n(\alpha) - \frac{\mathbb{V}[X]}{n^3} \ll \frac1{n^3}\sum_{d \,\leq\, n} \varphi(d) \, \beta^{n} \big(1 - \beta^{\lfloor n / d \rfloor} \big) \leq \frac1{n^3}\sum_{d \,\leq\, n} d \ll \frac1{n}.
\end{equation*}
Hence, in order to prove~\eqref{equ:variance_asymp}, it sufficies to show that $V_n(\alpha) = \mathrm{v}(\alpha) + o(1)$.

Let $d := (d_1, d_2)$ and $a_i := d_i / d$ for $i=1,2$.
Then, for all positive integers $j_0, j_1, j_2$, an easy computation shows that the equalities
\begin{equation*}
j_1 = \left\lfloor \frac{n}{d_1}\right\rfloor , \quad
j_2 = \left\lfloor \frac{n}{d_2}\right\rfloor , \quad
j_3 = \left\lfloor \frac{n}{[d_1, d_2]}\right\rfloor ,
\end{equation*}
are equivalent to
\begin{equation*}
\rho_1(\bm{a}, \bm{j})\, n < d \leq \rho_2(\bm{a}, \bm{j})\, n ,
\end{equation*}
where
\begin{equation*}
\rho_1(\bm{a}, \bm{j}) := \max\!\left(\frac1{a_1(j_1 + 1)}, \frac1{a_2(j_2 + 1)}, \frac1{a_1 a_2 (j_3 + 1)}\right)
\end{equation*}
and
\begin{equation*}
\rho_2(\bm{a}, \bm{j}) := \min\!\left(\frac1{a_1 j_1}, \frac1{a_2 j_2}, \frac1{a_1 a_2 j_3}\right) .
\end{equation*}
Therefore, letting
\begin{equation*}
\mathcal{S}_n := \big\{(\bm{a}, \bm{j}) \in \mathbb{N}^5 : (a_1, a_2) = 1,\; \exists d \in \mathbb{N} \text{ s.t. }\! \rho_1(\bm{a}, \bm{j})\, n < d \leq \rho_2(\bm{a}, \bm{j})\, n \big\}
\end{equation*}
and
\begin{equation*}
S(\bm{a}, \bm{j}; n) := \frac1{n^3} \sum_{\rho_1(\bm{a},\, \bm{j})\, n \,<\, d \,\leq\, \rho_2(\bm{a},\, \bm{j})\, n} \varphi(a_1 d) \, \varphi(a_2 d) ,
\end{equation*}
we have
\begin{equation*}
V_n(\alpha) = \sum_{(\bm{a},\, \bm{j}) \,\in\, \mathcal{S}_n} \beta^{j_1 + j_2 - j_3} (1 - \beta^{j_3}) \,S(\bm{a}, \bm{j}; n) .
\end{equation*}
Now let us define
\begin{equation}\label{equ:valpha}
\mathrm{v}(\alpha) := \sum_{(\bm{a},\, \bm{j}) \,\in\, \mathcal{S}_\infty} \beta^{j_1 + j_2 - j_3} (1 - \beta^{j_3}) \, D(\bm{a}, \bm{j}) ,
\end{equation}
where
\begin{equation*}
\mathcal{S}_\infty := \bigcup_{m \,\geq\, 1} \mathcal{S}_m = \big\{(\bm{a}, \bm{j}) \in \mathbb{N}^5 : (a_1, a_2) = 1,\, \rho_1(\bm{a}, \bm{j}) < \rho_2(\bm{a}, \bm{j}) \big\}
\end{equation*}
and
\begin{equation*}
D(\bm{a}, \bm{j}) := C_1(a_1, a_2) \big(\rho_2(\bm{a},\, \bm{j})^3 - \rho_1(\bm{a},\, \bm{j})^3\big) .
\end{equation*}
The convergence of series~\eqref{equ:valpha} follows easily from Remark~\ref{rmk:C1a1a2}, $\rho_2(\bm{a}, \bm{j}) \leq 1 / (a_1 a_2 j_3)$, and the fact that $\min(j_1, j_2) \geq j_3$ for all $(\bm{a}, \bm{j}) \in \mathcal{S}_\infty$.

Thanks to Lemma~\ref{lem:Phia1a2}, for each $(\bm{a}, \bm{j}) \in \mathcal{S}_n$ we have
\begin{equation*}
S(\bm{a}, \bm{j}; n) = D(\bm{a}, \bm{j}) + O\!\left(\sigma(a_1 a_2) \,\rho_2(\bm{a}, \bm{j})^2 \cdot \frac{(\log n)^2}{n}\right) .
\end{equation*}
Consequently, we get that
\begin{equation}\label{equ:Vnalphabound}
V_n(\alpha) = \mathrm{v}(\alpha) - \Sigma_1 + O\!\left(\Sigma_2 \cdot \frac{(\log n)^2}{n}\right) ,
\end{equation}
where
\begin{equation*}
\Sigma_1 := \sum_{(\bm{a},\, \bm{j}) \,\in\, \mathcal{S}_\infty \!\setminus \mathcal{S}_n} \beta^{j_1 + j_2 - j_3} (1 - \beta^{j_3}) \, D(\bm{a}, \bm{j})
\end{equation*}
and
\begin{equation*}
\Sigma_2 := \sum_{(\bm{a},\, \bm{j}) \,\in\, \mathcal{S}_n} \beta^{j_1 + j_2 - j_3} (1 - \beta^{j_3}) \, \sigma(a_1 a_2) \,\rho_2(\bm{a}, \bm{j})^2 .
\end{equation*}

If $(\bm{a}, \bm{j}) \in \mathcal{S}_\infty$ then, as we already noticed, $\min(j_1, j_2) \geq j_3$ and, moreover,
\begin{equation*}
\frac{j_2}{j_3 + 1} < a_1 < \frac{j_2 + 1}{j_3} \quad\text{ and }\quad \frac{j_1}{j_3 + 1} < a_2 < \frac{j_1 + 1}{j_3} .
\end{equation*}
Hence, we have
\begin{align}\label{equ:sumbetaj1j2j3}
\sum_{(\bm{a},\, \bm{j}) \,\in\, \mathcal{S}_\infty}& \frac{\beta^{j_1 + j_2 - j_3} (1 - \beta^{j_3})}{a_1 a_2 j_3^2} \leq \sum_{j_3 \,\geq\, 1} \frac{1 - \beta^{j_3}}{j_3^2} \sum_{j_1,\, j_2 \,\geq\, j_3} \beta^{j_1 + j_2 - j_3} \sum_{\substack{j_2 / (j_3 + 1) \,<\, a_1 \,<\, (j_2 + 1) / j_3 \\ j_1 / (j_3 + 1) \,<\, a_2 \,<\, (j_1 + 1) / j_3}} \frac1{a_1 a_2} \\
&\ll \sum_{j_3 \,\geq\, 1} \frac{1 - \beta^{j_3}}{j_3^2} \sum_{j_1,\, j_2 \,\geq\, j_3} \beta^{j_1 + j_2 - j_3} = \frac1{\alpha^2}\sum_{j \,\geq\, 1} \frac{(1 - \beta^j)\beta^j}{j^2} \nonumber \\
&\leq \frac1{\alpha} \sum_{j \,\leq\, 1 / \alpha} \frac1{j} + \frac1{\alpha^2}\sum_{j \,>\, 1 / \alpha} \frac1{j^2} \ll \frac{\log(1 / \alpha) + 1}{\alpha} , \nonumber 
\end{align}
where we used the inequality $1 - \beta^j \leq \alpha j$, which follows from Lemma~\ref{lem:Bernoulli}.

If $(\bm{a}, \bm{j}) \in \mathcal{S}_\infty \setminus \mathcal{S}_n$ then $\big(\rho_2(\bm{a}, \bm{j}) - \rho_1(\bm{a}, \bm{j})\big) n < 1$ and consequently, also by Remark~\ref{rmk:C1a1a2},
\begin{equation}\label{equ:Daj_bound}
D(\bm{a}, \bm{j}) \ll a_1 a_2 \big(\rho_2^3 - \rho_1^3\big) = a_1 a_2 \big(\rho_1^2 + \rho_1 \rho_2 + \rho_2^2\big)(\rho_2 - \rho_1) \ll \frac{a_1 a_2 \rho_2^2}{n} \leq \frac1{a_1 a_2 j_3^2 n} ,
\end{equation}
where, for brevity, we wrote $\rho_i := \rho_i(\bm{a}, \bm{j})$ for $i=1,2$.

On the one hand, from~\eqref{equ:sumbetaj1j2j3} and~\eqref{equ:Daj_bound} it follows that
\begin{equation}\label{equ:Sigma1bound}
\Sigma_1 \ll \frac{\log(1 / \alpha) + 1}{\alpha n} = o(1) ,
\end{equation}
as $\alpha n / \!\big((\log n)^3 \log \log n)\big) \to +\infty$ (actually, $\alpha n / \!\log n \to +\infty$ is sufficient).
On the other hand, from~\eqref{equ:sumbetaj1j2j3} and the inequality $\sigma(m) \leq m \log \log m$ (see, e.g.,~\cite[Theorem~5.7]{MR3363366}) it follows that
\begin{equation}\label{equ:Sigma2bound}
\Sigma_2 \leq \sum_{(\bm{a},\, \bm{j}) \,\in\, \mathcal{S}_n} \frac{\beta^{j_1 + j_2 - j_3} (1 - \beta^{j_3})}{a_1 a_2 j_3^2} \cdot \frac{\sigma(a_1 a_2)}{a_1 a_2} \ll \frac{(\log(1/\alpha) + 1) \log \log n}{\alpha} = o\!\left(\frac{n}{(\log n)^2}\right) ,
\end{equation}
as $\alpha n / \big((\log n)^3 \log \log n\big) \to +\infty$.

At this point, putting together~\eqref{equ:Vnalphabound},~\eqref{equ:Sigma1bound}, and~\eqref{equ:Sigma2bound}, we obtain $V_n(\alpha) = \mathrm{v}(\alpha) + o(1)$.
The proof of~\eqref{equ:variance_asymp} is complete.

It remains only to prove the upper bound~\eqref{equ:variance_upper}.
From~\eqref{equ:VX} it follows that
\begin{align*}
\mathbb{V}[X] &\leq \sum_{[d_1\!,\, d_2] \,\leq\, n} \varphi(d_1)\,\varphi(d_2) \, \beta^{\lfloor n / d_1 \rfloor + \lfloor n / d_2 \rfloor - \lfloor n / [d_1, d_2] \rfloor} \big(1 - \beta^{\lfloor n / [d_1, d_2] \rfloor} \big) \\
&\leq \sum_{[d_1\!,\, d_2] \,\leq\, n} d_1 d_2 \cdot \frac{\alpha n}{[d_1, d_2]} = \alpha n \sum_{[d_1\!,\, d_2] \,\leq\, n} (d_1, d_2) \leq \alpha n \sum_{d \,\leq\, n} d \sum_{a_1 a_2 \,\leq\, n / d} 1 \\
&= \alpha n \sum_{d \,\leq\, n} d \sum_{m \,\leq\, n / d} \tau(m) \ll \alpha n^2 \sum_{d \,\leq\, n} \log\!\left(\frac{n}{d}\right) = \alpha n^2 \big(n \log n - \log (n!)\big) < \alpha n^3 ,
\end{align*}
where we used Lemma~\ref{lem:Bernoulli}, Lemma~\ref{lem:tau}, and the bound $n! > (n / \mathrm{e})^n$.
Thus~\eqref{equ:variance_upper} is proved.
\end{proof}

\begin{proof}[Proof of Theorem~\ref{thm:Aq}]
By Chebyshev's inequality, Theorem~\ref{thm:expectation} and Theorem~\ref{thm:variance}, we have
\begin{equation*}
\mathbb{P}\big[\,|X - \mathbb{E}[X]| > \varepsilon\!\; \mathbb{E}[X] \,\big] < \frac{\mathbb{V}[X]}{\big(\varepsilon \mathbb{E}[X]\big)^2} \ll \frac{\alpha n^3}{(\varepsilon \alpha n)^2} \ll \frac1{\varepsilon^2 \alpha n} = o_\varepsilon(1) ,
\end{equation*}
as $\alpha n \to +\infty$.
Hence, using again Theorem~\ref{thm:expectation}, we get
\begin{equation*}
X \sim \frac{3}{\pi^2} \cdot \frac{\alpha \Li_2(1 - \alpha)}{1 - \alpha} \cdot n^2 ,
\end{equation*}
with probability $1 - o(1)$, as $\alpha n \to +\infty$.
\end{proof}

\bibliographystyle{amsplain}

\end{document}